\newtheorem{theorem}{Theorem}[section]
\newtheorem{lemma}[theorem]{Lemma}
\newtheorem{proposition}[theorem]{Proposition}
\theoremstyle{definition}
\newtheorem{definition}[theorem]{Definition}
\newtheorem{example}[theorem]{Example}
\newcommand{\Z}{\mathbb{Z}}
\begin{document}


\title[A note on coverings of virtual knots]
{A note on coverings of virtual knots}

\author{Takuji NAKAMURA}
\address{Department of Engineering Science, 
Osaka Electro-Communication University,
Hatsu-cho 18-8, Neyagawa, Osaka 572-8530, Japan}
\email{n-takuji@osakac.ac.jp}

\author{Yasutaka NAKANISHI}
\address{Department of Mathematics, Kobe University, 
Rokkodai-cho 1-1, Nada-ku, Kobe 657-8501, Japan}
\email{nakanisi@math.kobe-u.ac.jp}

\author{Shin SATOH}
\address{Department of Mathematics, Kobe University, 
Rokkodai-cho 1-1, Nada-ku, Kobe 657-8501, Japan}
\email{shin@math.kobe-u.ac.jp}

\renewcommand{\thefootnote}{\fnsymbol{footnote}}
\footnote[0]{The first author is partially supported by 
JSPS Grants-in-Aid for Scientific Research (C), 
17K05265. 
The third author is partially supported by 
JSPS Grants-in-Aid for Scientific Research (C), 
16K05147.}


\renewcommand{\thefootnote}{\fnsymbol{footnote}}
\footnote[0]{2010 {\it Mathematics Subject Classification}. 
Primary 57M25; Secondary 57M27.}  



\keywords{virtual knot, covering, Gauss diagram, 
index, M\"obius function.} 


\maketitle


\begin{abstract} 
For a virtual knot $K$ and an integer $r\geq 0$, 
the $r$-covering $K^{(r)}$ is defined 
by using the indices of chords on a Gauss diagram of $K$. 
In this paper, we prove that 
for any finite set of virtual knots 
$J_0,J_2,J_3,\dots,J_m$,  
there is a virtual knot $K$ such that 
$K^{(r)}=J_r$ $(r=0\mbox{ and }2\leq r\leq m)$, 
$K^{(1)}=K$, 
and otherwise $K^{(r)}=J_0$. 
\end{abstract}


\section{Introduction}\label{sec1} 

Odd crossings are first introduced for constructing 
a simple invariant called the odd writhe of a virtual knot by Kauffman \cite{Kau2}. 
By using odd crossings, 
Manturov defines a map 
from the set of virtual knots to itself 
by replacing the odd crossings with virtual crossings \cite{Man}. 

Later the notion of index is introduced 
in \cite{CG, Hen, ILL, ST}
which assigns an integer to each real crossing 
such that the parity of the index coincides with 
the original parity. 
The $n$-writhe is defined as 
a refinement of the odd writhe. 
Jeong defines an invariant called the zero polynomial of a virtual knot 
by using real crossings of index $0$ \cite{Jeo}. 
In fact, Im and Kim prove that 
the zero polynomial is coincident with 
the writhe polynomial of the virtual knot 
obtained by replacing the real crossings 
whose indices are non-zero 
with virtual crossings \cite{IK}. 
They also study the operation replacing 
the real crossings whose indices are not divisible 
by $r$ for a positive integer $r$. 
This operation is originally considered 
for flat virtual knots by Turaev \cite{Tur} 
where he calls the obtained knot the {\it $r$-covering}.

The writhe polynomial $W_K(t)$ of a virtual knot $K$ 
is the polynomial such that the coefficient of $t^n$ is equal to 
the $n$-writhe of $K$. 
A characterization of $W_K(t)$ is given as follows.

\begin{theorem}[{\cite{ST}}]\label{thm11}
For a Laurent polynomial $f(t)\in{\Z}[t,t^{-1}]$, 
the following are equivalent. 
\begin{itemize}
\item[(i)] 
There is a virtual knot $K$ with $W_K(t)=f(t)$. 
\item[(ii)] 
$f(1)=f'(1)=0$. 
\end{itemize}
\end{theorem}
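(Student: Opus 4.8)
The plan is to prove the two implications separately: from (i) to (ii) is a short computation, and essentially all the work lies in the realization (ii)$\Rightarrow$(i).

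\emph{Necessity.} I would start from the Gauss-diagram formula $W_K(t)=\sum_{c}\varepsilon(c)\bigl(t^{\mathrm{ind}(c)}-1\bigr)$, the sum running over the real crossings $c$ of a Gauss diagram of $K$, where $\varepsilon(c)=\pm1$ is the sign and $\mathrm{ind}(c)\in\Z$ is the index; here the coefficient of $t^{n}$ for $n\ne 0$ is exactly the $n$-writhe. Then $W_K(1)=0$ is immediate, giving $f(1)=0$, while $W_K'(1)=\sum_{c}\varepsilon(c)\,\mathrm{ind}(c)$. The one nontrivial input is the standard identity $\sum_{c}\varepsilon(c)\,\mathrm{ind}(c)=0$: each unordered pair of mutually crossing chords contributes opposite amounts to the two indices it affects, so the total signed index cancels in pairs. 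Hence $f'(1)=0$ as well.

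\emph{Reduction of the converse.} For (ii)$\Rightarrow$(i) I would first describe the target set $L=\{f\in\Z[t,t^{-1}]\mid f(1)=f'(1)=0\}$ as a free abelian group. Writing $f=\sum_n a_n t^n$, the coefficients $a_n$ with $n\ne 0,1$ may be chosen freely, after which $a_1$ and $a_0$ are forced by the two linear relations; thus $\{g_m(t)=t^{m}-mt+(m-1)\}_{m\ne 0,1}$ is a $\Z$-basis of $L$. Next I would observe that the realizable polynomials form a subgroup of $\Z[t,t^{-1}]$: additivity of the writhe polynomial under connected sum of long virtual knots gives $W_{K_1\#K_2}=W_{K_1}+W_{K_2}$, hence closure under addition, while reversing the orientation and simultaneously switching all crossings fixes each index but negates each sign, so it sends $W_K$ to $-W_K$ and gives closure under negation. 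It therefore suffices to realize a generating set of $L$.

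\emph{The gadgets.} The heart of the argument is an explicit construction, for which I would use that \emph{every} signed, oriented chord diagram is the Gauss diagram of some virtual knot. The main device is a ``star'' diagram $S_m$ consisting of one central chord crossed by $m$ pairwise non-crossing parallel chords. Choosing the signs and arrow directions so that the central chord has sign $+1$ and index $m$ while each parallel chord has sign $-1$ and index $+1$ makes every parallel chord receive its index solely from the central chord, and a direct count gives $W_{S_m}=t^{m}-mt+(m-1)=g_m$ for $m\ge 2$. Reversing the orientation of $S_m$ realizes $g_m(t^{-1})=t^{-m}-mt^{-1}+(m-1)=:\tilde g_m$, and a single pair of crossing chords of index $\pm1$ realizes $g_{-1}=t+t^{-1}-2$. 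Since $g_{-k}=\tilde g_k+k\,g_{-1}$ for $k\ge 2$, connected sums of these gadgets, together with the negating operation, recover every basis element $g_m$ and hence all of $L$, completing the realization.

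\emph{Main obstacle.} The genuinely delicate point is the index bookkeeping inside each gadget: the auxiliary crossings needed to raise one chord to index $m$ unavoidably acquire nonzero indices of their own, and the diagram must be arranged so that these combine to exactly the prescribed $n$-writhes and nothing more. The pairwise-cancellation identity $\sum_c\varepsilon(c)\,\mathrm{ind}(c)=0$ both constrains the admissible sign and direction choices and certifies their consistency, so the verification reduces to a finite, if careful, computation of the indices in each explicit diagram.
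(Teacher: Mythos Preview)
Your proposal is correct and matches the paper's approach: the paper's $(n,\varepsilon)$-snails are exactly your star diagrams $S_m$ (one central chord with $|n|$ anklets), each contributing $\varepsilon(t^{n}-nt+(n-1))$ to the writhe polynomial, and juxtaposition of long Gauss diagrams plays the role of your connected sums. The only cosmetic difference is that the paper builds snails directly for every $n\ne 0,1$ and both signs $\varepsilon=\pm1$, whereas you reach the negative-index basis elements via orientation reversal and the identity $g_{-k}=\tilde g_k+k\,g_{-1}$; the necessity argument $f(1)=f'(1)=0$ is not spelled out in the paper (it is cited from \cite{ST}) and your pairwise-cancellation proof of $\sum_c\varepsilon(c)\,\mathrm{Ind}(c)=0$ is the standard one.
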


For an integer $r\geq 0$, 
we denote by $K^{(r)}$ 
the $r$-covering of a virtual knot $K$. 
By definition, 
we have $K^{(1)}=K$ and 
$K^{(r)}=K^{(0)}$ 
for a sufficiently large $r$. 
The aim of this note is to prove the following.

\begin{theorem}\label{thm12}
Let $m\geq 1$ be an integer, $J_n$ $(0\leq n\leq m, n\ne 1)$  
$m$ virtual knots, 
and $f(t)$ a Laurent polynomial with $f(1)=f'(1)=0$. 
Then there is a virtual knot $K$ such that 
$$K^{(r)}=
\left\{
\begin{array}{ll}
J_0 & \mbox{for }r=0 \mbox{ and }r\geq m+1, \\ 
K & \mbox{for } r=1,  \\
J_r & \mbox{otherwise}, \\
\end{array}
\right.$$
and $W_K(t)=f(t)$. 
\end{theorem}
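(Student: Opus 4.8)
The plan is to read the $r$-covering as a sub-Gauss-diagram: $K^{(r)}$ is obtained from the Gauss diagram of $K$ by deleting every chord whose index is not divisible by $r$ (for $r=0$, keeping only the chords of index $0$). Two structural facts drive the construction. First, connected sum of Gauss diagrams leaves the index of every chord unchanged, so it commutes with taking coverings, $(D\#D')^{(r)}=D^{(r)}\#D'^{(r)}$, and adds writhe polynomials, $W_{D\#D'}=W_D+W_{D'}$; this lets me assemble $K$ from independent pieces. Second, the coverings are forced to nest: if $r\mid r'$ then the survivors of $K^{(r')}$ form a subset of those of $K^{(r)}$, so $K^{(r')}$ is obtained from $K^{(r)}$ by further deletions. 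The one piece of freedom I will exploit throughout is that the index of a chord computed inside a sub-diagram is in general different from its index in $K$, since deleting chords changes the remaining indices. In particular a family of chords of ambient index $0$ can, on its own, represent an arbitrary virtual knot $J_0$, even one with nontrivial writhe polynomial.

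First I would isolate the key realization lemma: for any virtual knots $A$ and $B$, some Gauss diagram of $B$ contains a chord-subset representing $A$; equivalently, any diagram of $A$ can be enlarged, by adding chords, to a diagram of $B$. Granting this, I build the covering data by downward induction on $s$. Start from a diagram $C_0$ of $J_0$ whose chords are declared to have ambient index $0$. For $s=m,m-1,\dots,2$ in turn, add a block $B_s$ of chords of ambient index exactly $s$ so that the current level-$s$ covering, which already contains $C_0$ together with the blocks of the multiples of $s$ placed earlier, is turned into a diagram of $J_s$ by the realization lemma. Because the chords of $B_s$ have index $s\le m$, they are absent from $K^{(0)}$ and from every $K^{(r)}$ with $r\ge m+1$, and they are invisible at every level $r>s$; hence the already-fixed larger coverings and all the coverings that must equal $J_0$ are left untouched, while the smaller levels $r\mid s$ are perturbed only in ways that the later stages repair. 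To set the ambient indices to the prescribed values independently of the internal knotting that realizes each $J_s$, I use auxiliary chords of index $\pm1$: such a chord survives in no covering $K^{(r)}$ with $r\ge 2$ and in no $K^{(0)}$, so crossing an essential chord with a controlled number of them adjusts its ambient index without disturbing any constrained covering.

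Finally I would correct the writhe polynomial. After the topological construction $W_K$ equals some admissible $g$ with $g(1)=g'(1)=0$, and I must reach $f$ without changing any covering. The device is a covering-trivial block: realize $f-g$ by a gadget as in Theorem~\ref{thm11}, but arrange that all of its index-producing crossings are against auxiliary chords of index $\pm1$. Then in every covering $K^{(r)}$ with $r\ne 1$ the gadget's chords lose their crossing partners and become isolated curls, so the block is trivial there, yet it carries the full writhe polynomial $f-g$ inside $K=K^{(1)}$; connected-summing it changes $W_K$ from $g$ to $f$ and leaves every $K^{(r)}$ alone. A Möbius-function count over the divisibility poset organizes the $t^{\pm1}$ contributions of all the auxiliary chords and shows the conditions $f(1)=f'(1)=0$ are exactly what is needed for the bookkeeping to close. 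I expect the main obstacle to be the realization lemma together with this entanglement: because the nesting of the coverings is forced, one cannot prescribe the coverings and the writhe polynomial by genuinely independent summands, and the whole argument hinges on proving that a target knot can always be produced by \emph{adding} chords and that the writhe-carrying chords can simultaneously be made Reidemeister-I trivial in every covering.
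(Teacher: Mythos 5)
Your outline is viable but takes a genuinely different route from the paper, and its load-bearing steps are asserted rather than proved. The paper never needs your realization lemma or the downward-induction repair process: it works with \emph{long} virtual knots (linear Gauss diagrams), builds one block per level, and concatenates. For each target level $n$ it replaces every chord $c$ of a diagram of $J_n$ by a parallel family of chords whose ambient indices run over prescribed values and whose signs are weighted by an integer-valued function ($f_n$ in Theorem~\ref{thm32} for the level-$0$ block, $g_n(i)=\mu(n/i)$ in Theorem~\ref{thm42} and Proposition~\ref{prop43} for the level-$n$ blocks); the defining property of these functions is that at every unwanted level $r$ the surviving chords of each parallel family have total sign zero, so they cancel in pairs by Reidemeister~II and that covering is \emph{trivial}. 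The blocks are therefore completely decoupled, the writhe correction is an independent juxtaposition of snails (Theorem~\ref{thm51}), and the closure of the concatenation proves Theorem~\ref{thm12}. By contrast your construction entangles the levels and leans on the claim that any Gauss diagram of $A$ can be enlarged, by adding chords, to a diagram of $B$. That claim is true (place next to each chord of $A$ an oppositely signed parallel partner so that all of $A$ cancels by Reidemeister~II, then adjoin a diagram of $B$ in a disjoint interval), but you must prove it, and you must prove it under the constraint that every added chord has ambient index exactly $s$ \emph{after} all later additions: each new chord changes the ambient indices of the chords it links, so the bookkeeping has to be closed by a final round of index-$\pm1$ anklets, and you must check that the cancelling partners remain adjacent in the level-$s$ subdiagram (they do, since everything inserted between them at later stages has index not divisible by $s$, but this needs saying). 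Your covering-trivial writhe gadget is exactly the paper's snail $S(n,\varepsilon)$, and your auxiliary index-$\pm1$ chords are exactly the paper's anklets. One further point: ``connected sum'' of circular Gauss diagrams is not well defined on virtual knot types, which is precisely why the paper works with long virtual knots and closes up only at the very end; your argument survives because you only ever manipulate one explicit diagram, but the issue should be acknowledged. On balance your plan can be completed, but the M\"obius-function design is what lets the paper avoid both the realization lemma and the induction entirely.
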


This paper is organized as follows. 
In Section~\ref{sec2}, 
we define the $r$-covering $K^{(r)}$ 
of a (long) virtual knot $K$. 
We also introduce an anklet of a chord 
in a Gauss diagram which will be used in the consecutive sections. 
In Sections~\ref{sec3} and \ref{sec4}, 
we study the $0$-covering $K^{(0)}$ 
and $r$-covering $K^{(r)}$ for $r\geq 2$ 
of a long virtual knot, 
respectively. 
In Section~\ref{sec5}, 
we review the writhe polynomial of a virtual knot, 
and prove Theorem~\ref{thm11}.


\section{Gauss diagrams}\label{sec2} 

A {\it circular} or {\it linear Gauss diagram}  is an oriented circle or line 
equipped with a finite number of oriented and signed chords 
spanning the circle or line, respectively. 
The {\it closure} of a linear Gauss diagram 
is the circular Gauss diagram 
obtained by taking the one-point compactification of the line.

Let $c$ be a chord of a Gauss diagram $G$ 
with sign $\varepsilon=\varepsilon(c)$. 
We give signs $-\varepsilon$ and $\varepsilon$ 
to the initial and terminal endpoints of $c$, respectively. 
We consider the case that $G$ is circular. 
The endpoints of  $c$ divide the circle into two arcs. 
Let $\alpha$ be the arc oriented 
from the initial endpoint of $c$ to the terminal. 
See Figure~\ref{fig01}. 
The {\it index} of $c$ is the sum of signs 
of endpoints of chords on $\alpha$, 
and denoted by ${\rm Ind}_G(c)$ 
(cf.~\cite{Che, Kau, ST}). 
In the case that $G$ is linear, 
the index of $c$ is defined 
as that of $c$ in the closure of $G$.

\begin{figure}[htb]
\begin{center}
\includegraphics[bb=0 0 81 61]{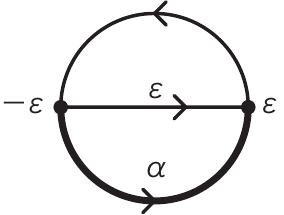}
\caption{The orientation and signs of a chord and its endpoints.}
\label{fig01}
\end{center}
\end{figure}

Let $G$ be a circular or linear Gauss diagram. 
For a positive integer $r$, 
we denote by $G^{(r)}$ 
the Gauss diagram obtained from $G$ 
by removing all the chords $c$ with 
${\rm Ind}_G(c)\not\equiv 0~({\rm mod}\ r)$ 
(cf.~\cite{Tur}). 
In particular, we have $G^{(1)}=G$. 
For $r=0$, 
we denote by $G^{(0)}$ the Gauss diagram 
obtained from $G$ by removing 
all the chords $c$ with ${\rm Ind}_G(c)\ne 0$. 
Since the number of chords of $G$ is finite, 
we have $G^{(r)}=G^{(0)}$ for sufficiently large $r$.

Two circular Gauss diagrams $G$ and $H$ are {\it equivalent}, 
denoted by $G\sim H$, 
if $G$ is related to $H$ 
by a finite sequence of {\it Reidemeister moves} I--III 
as shown in Figure~\ref{fig02}. 
A {\it virtual knot} is an equivalence class of circular Gauss diagrams  
up to this equivalence relation 
(cf.~\cite{GPV, Kau2}). 
Similarly, the equivalence relation among 
linear Gauss diagrams are defined, 
and an equivalence class is called 
a {\it long virtual knot}. 
The {\it trivial} (long) virtual knot 
is presented by a Gauss diagram with no chord.

\begin{figure}[htb]
\begin{center}
\includegraphics[bb=0 0 342 279]{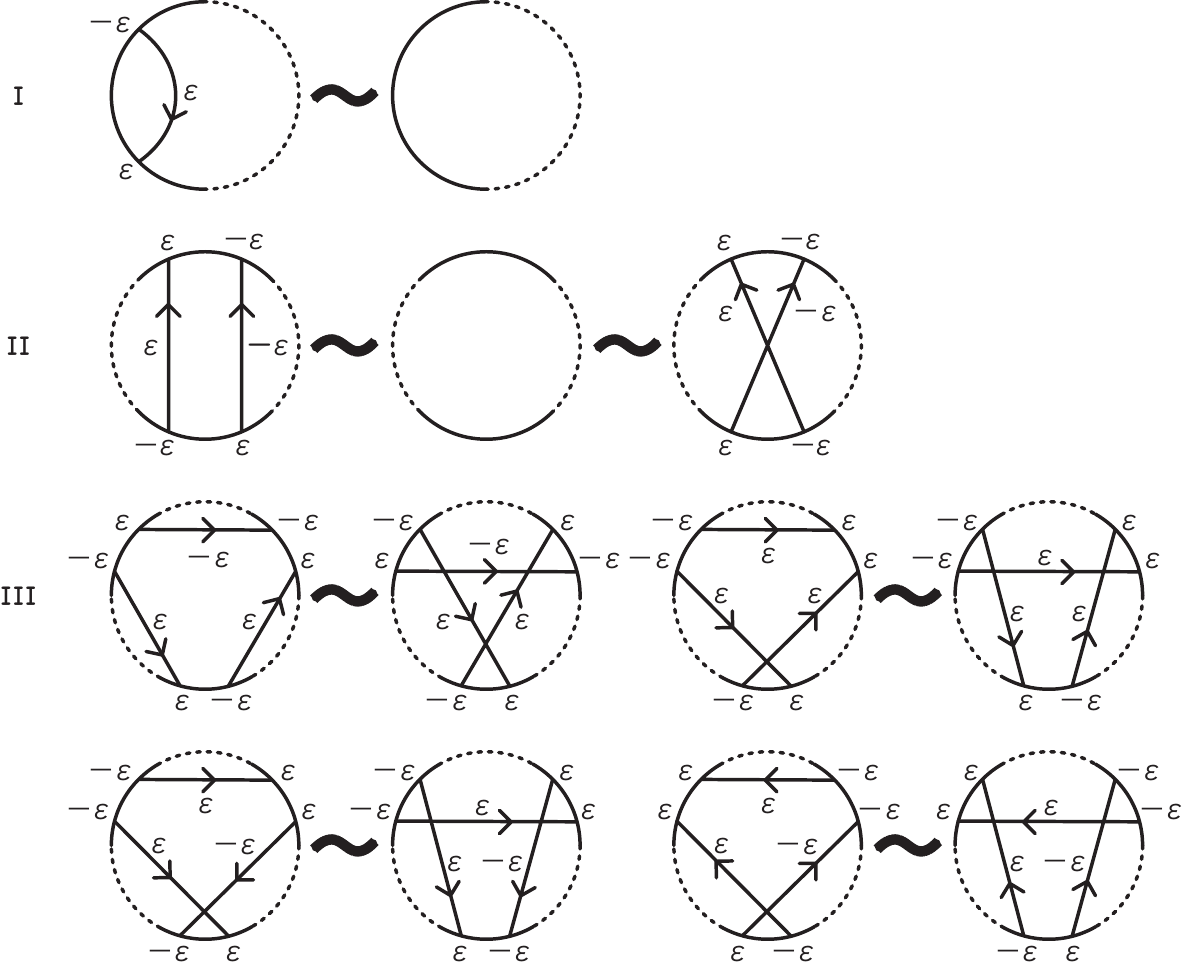}
\caption{Reidemeister moves on Gauss diagrams.}
\label{fig02}
\end{center}
\end{figure}

\begin{lemma}[cf.~\cite{IK, Tur}]\label{lem21}
Let $G$ and $H$ be circular or linear Gauss diagrams such that $G\sim H$. 
Then it holds that $G^{(r)}\sim H^{(r)}$ for any integer $r\geq 0$. 
\hfill$\Box$
\end{lemma}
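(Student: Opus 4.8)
The plan is to reduce the statement to the case where $H$ is obtained from $G$ by a single Reidemeister move, since the relation $\sim$ is generated by such moves and transitivity then yields the general case. For a single move $G\to H$ there are two things to verify: first, that every chord \emph{not} created, destroyed, or rearranged by the move has the same index in $G$ and in $H$, so that the condition ${\rm Ind}(c)\equiv 0\pmod r$ (respectively ${\rm Ind}(c)=0$ when $r=0$) selects the \emph{same} chords on both sides; and second, that the chords directly involved in the move are deleted or retained consistently on the two sides, so that $G^{(r)}$ and $H^{(r)}$ again differ by a Reidemeister move or coincide. I would treat $r=0$ and $r\geq 1$ uniformly, since in both cases whether a chord is retained depends only on its index.

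The heart of the argument is a local index computation for each of the three moves. For a Reidemeister~I move the new chord $c$ has its two endpoints consecutive on the circle, so one of its two defining arcs is empty; since the signs of all endpoints of a Gauss diagram sum to $0$ and the two endpoints of $c$ contribute $-\varepsilon+\varepsilon=0$, this forces ${\rm Ind}(c)=0$, and the two endpoints of $c$ lie on the same side of every other chord, so they contribute $-\varepsilon+\varepsilon=0$ to any other index. For a Reidemeister~II move the two new chords have opposite signs, and their four endpoints occur as two consecutive pairs, one pair on each strand; within each pair the two endpoints are of the same type (both initial or both terminal) while the chords have opposite signs, so each pair contributes $0$ to any arc containing it. Hence every other index is unchanged, and a short computation shows that the two new chords have \emph{equal} index. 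For a Reidemeister~III move no chord is created or destroyed; the move only permutes endpoints inside a disk meeting the circle in three arcs, and one checks that every index, of an involved or an uninvolved chord alike, is preserved (cf.~\cite{ST}).

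With these facts the conclusion follows by matching up deletions. Under a Reidemeister~I move the new chord has index $0$ and is retained for every $r$, so $G^{(r)}$ and $H^{(r)}$ differ by that same move. Under a Reidemeister~II move the two new chords have equal index and are therefore retained together or deleted together: in the first case the coverings differ by the same Reidemeister~II move, in the second they coincide. Under a Reidemeister~III move the three chords have the same indices before and after, so exactly the same subcollection survives on both sides; if all three survive the coverings differ by the Reidemeister~III move, and if only some survive, the restriction of the move to the surviving chords is realized by a Reidemeister~II move or a planar isotopy. In every case $G^{(r)}\sim H^{(r)}$. The linear case is handled by the identical local analysis: indices of chords of a linear diagram are computed in its closure, and since the Reidemeister moves are supported away from the point at infinity, the same per-move computations and deletion-matching apply verbatim.

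I expect the main obstacle to be the two index identities for the chords \emph{involved} in a move — in particular verifying that the two Reidemeister~II chords share a common index and that all three Reidemeister~III indices are preserved — together with the bookkeeping showing that the restriction of a Reidemeister~III move to a proper subcollection of its three chords is still a legitimate move. The computations for uninvolved chords become routine once the principle that the two endpoints of a single chord contribute $0$ to any arc containing both is isolated, but the involved-chord analysis requires care with the orientation conventions and a finite check over the possible local configurations.
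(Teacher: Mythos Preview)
The paper gives no proof of this lemma: it is marked with $\Box$ and attributed to \cite{IK,Tur}. Your strategy---reduce to a single Reidemeister move and verify that the retained and deleted chords match up on the two sides---is exactly the standard one, and your analysis of moves I and II is correct.

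The gap is in your treatment of Reidemeister~III, precisely at the point you yourself flagged as the main obstacle. You claim that if only some of the three chords survive, ``the restriction of the move to the surviving chords is realized by a Reidemeister~II move or a planar isotopy.'' This is not justified, and it is false when exactly two of the three chords survive. In the Gauss diagram the III move transposes the two endpoints lying in each of three short arcs; if one chord is deleted, the net effect on the remaining two chords is a transposition of two adjacent endpoints in a single arc, which changes whether those two chords cross. On the virtual-diagram side this is a mixed triangle move with two real crossings and one virtual crossing---a forbidden move, not an equivalence, and certainly not realizable by RII or isotopy.

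What actually closes the argument is an index identity you did not state: in any RIII configuration the three indices satisfy a relation of the form ${\rm Ind}(c_i)={\rm Ind}(c_j)+{\rm Ind}(c_k)$ (up to signs), the distinguished chord being the one between the all-over and all-under strands. Reducing modulo $r$ (or working exactly when $r=0$), this relation makes it impossible for exactly one of the three indices to fail the divisibility condition, so the number of surviving chords is always $0$, $1$, or $3$, never $2$. Those cases are then disposed of by identity, planar isotopy, and a genuine RIII move respectively. The ``finite check over the possible local configurations'' you anticipated is exactly the verification of this linear relation for each oriented RIII type; without it the RIII case does not go through.
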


Although only the case of a circular Gauss diagram is 
studied in \cite{IK, Tur}, 
Lemma~\ref{lem21} for a liner Gauss diagram 
can be proved similarly.

\begin{definition}\label{def22}
Let $K$ be a (long) virtual knot, and 
$r\geq 0$ an integer. 
The {\it $r$-covering} of $K$ is the (long) virtual knot 
presented by $G^{(r)}$ for some Gauss diagram $G$ of $K$. 
We denote it by $K^{(r)}$. 
\end{definition}

The well-definedness of $K^{(r)}$ follows 
from Lemma~\ref{lem21}. 
We have $K^{(1)}=K$ and 
$K^{(r)}=K^{(0)}$ for sufficiently large $r$. 

Let $c(G)$ denote the number of chords of a Gauss diagram $G$. 
The {\it real crossing number} of a (long) virtual knot $K$ 
is the minimal number of $c(G)$ 
for all Gauss diagrams $G$ of $K$, 
and denoted by ${\rm c}(K)$.

\begin{lemma}\label{lem23}
Let $K$ be a $($long$)$ virtual knot, and $r\geq 0$ an integer. 
Then it holds that 
${\rm c}(K^{(r)})\leq {\rm c}(K)$. 
In particular, ${\rm c}(K^{(r)})={\rm c}(K)$ holds 
if and only if $K^{(r)}=K$. 
\end{lemma}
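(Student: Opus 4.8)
The plan is to use the elementary observation that passing from a Gauss diagram $G$ to $G^{(r)}$ only deletes chords, and to realize both crossing numbers by minimal Gauss diagrams.

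First I would fix a Gauss diagram $G$ of $K$ with $c(G)={\rm c}(K)$, i.e. a diagram realizing the real crossing number. By Definition~\ref{def22}, together with the well-definedness guaranteed by Lemma~\ref{lem21}, the diagram $G^{(r)}$ presents $K^{(r)}$. Since $G^{(r)}$ is obtained from $G$ by removing exactly the chords $c$ with ${\rm Ind}_G(c)\not\equiv 0~({\rm mod}\ r)$ (or with ${\rm Ind}_G(c)\ne 0$ when $r=0$, and none when $r=1$), the chords of $G^{(r)}$ form a subset of those of $G$; hence $c(G^{(r)})\leq c(G)$. As ${\rm c}(K^{(r)})$ is the minimum of $c(\cdot)$ over all Gauss diagrams of $K^{(r)}$, we obtain
$$
{\rm c}(K^{(r)})\leq c(G^{(r)})\leq c(G)={\rm c}(K),
$$
which is the asserted inequality.

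For the ``in particular'' part, the implication $K^{(r)}=K\Rightarrow{\rm c}(K^{(r)})={\rm c}(K)$ is trivial, so the content lies in the converse. Assuming ${\rm c}(K^{(r)})={\rm c}(K)$, I would revisit the displayed chain for the same minimal $G$: its two ends now coincide, forcing every intermediate inequality to become an equality, in particular $c(G^{(r)})=c(G)$. Since $G^{(r)}$ is obtained from $G$ solely by deleting chords yet retains the same number of them, no chord can have been deleted, so $G^{(r)}=G$ as Gauss diagrams. Passing to equivalence classes then yields $K^{(r)}=[G^{(r)}]=[G]=K$.

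The argument is short, and the only point demanding attention---the nearest thing to an obstacle---is keeping the directions of the inequalities straight: one must bound the minimum ${\rm c}(K^{(r)})$ from above by the value $c(G^{(r)})$ on a specific, possibly non-minimal, diagram, while simultaneously using the minimality of $G$ for $K$ at the other end. Once the chain is arranged around a minimal representative of $K$, the collapse to $c(G^{(r)})=c(G)$, and hence to $G^{(r)}=G$, is automatic.
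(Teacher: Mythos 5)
Your proposal is correct and follows essentially the same argument as the paper: fix a minimal Gauss diagram $G$ of $K$, note that $G^{(r)}$ arises by deleting chords to get the chain ${\rm c}(K^{(r)})\leq c(G^{(r)})\leq c(G)={\rm c}(K)$, and in the equality case conclude that no chord was deleted, so $G^{(r)}=G$ and $K^{(r)}=K$. No discrepancies to report.
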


\begin{proof} 
Let $G$ be a Gauss diagram of $K$ with 
$c(G)={\rm c}(K)$. 
Since $G^{(r)}$ is obtained from $G$ by removing some chords, 
we have 
$${\rm c}(K^{(r)})\leq c(G^{(r)})\leq c(G)={\rm c}(K).$$
In particular, if the equality holds, 
then $G^{(r)}=G$ and $K^{(r)}=K$. 
\end{proof}

Let $c_1,\dots,c_n$ be chords of a Gauss diagram $G$. 
We add several parallel chords near an endpoint of $c_i$ 
$(i=1,\dots,n)$ to obtain a Gauss diagram $G'$. 
Here, the orientations and signs of the added chords 
are chosen arbitrarily. 
See Figure~\ref{fig03}(i). 
The parallel chords added to $c_i$ are called {\it anklets} of $c_i$. 
We remark that the index of an anklet is equal to $\pm 1$.

\begin{figure}[htb]
\begin{center}
\includegraphics[bb=0 0 307 78]{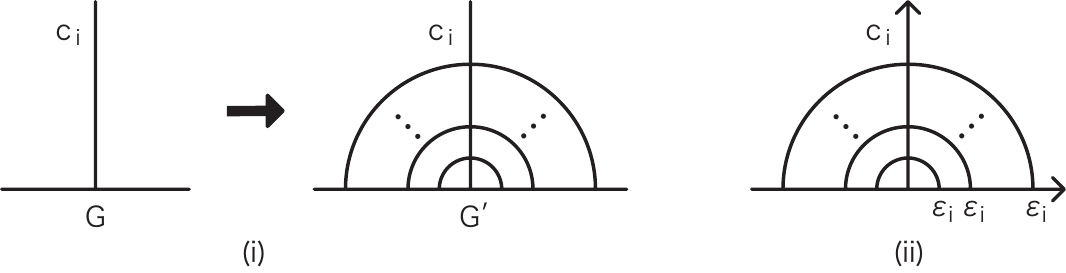}
\caption{Anklets.}
\label{fig03}
\end{center}
\end{figure}

\begin{lemma}
Let $c_1,\dots,c_n$ be chords of a Gauss diagram $G$. 
For any integers $a_1,\dots,a_n$, 
by adding several anklets to each $c_i$ near its initial endpoint suitably, 
we obtain a Gauss diagram $G'$ 
such that ${\rm Ind}_{G'}(c_i)=a_i$ for 
any $i=1,\dots,n$, 
and ${\rm Ind}_{G'}(c)={\rm Ind}_{G}(c)$ 
for any $c\ne c_1,\dots,c_n$. 

\end{lemma}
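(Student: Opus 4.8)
The plan is to prove the lemma by first isolating the effect of a single anklet on the index of every chord, and then adding the right number of anklets, with the right signs, near each initial endpoint. To begin, for each $i$ I would record the integer $d_i = a_i - {\rm Ind}_G(c_i)$ measuring how much the index of $c_i$ must change. The key local computation is the effect of inserting one anklet near the initial endpoint of $c_i$. Such an anklet straddles that endpoint, so exactly one of its two endpoints lies on the arc $\alpha_i$ running from the initial to the terminal endpoint of $c_i$. Recalling the convention that the initial and terminal endpoints of a chord of sign $\delta$ carry signs $-\delta$ and $+\delta$, I would check that by choosing the sign $\delta\in\{+1,-1\}$ of the anklet (and, if desired, its orientation) the sign of the unique endpoint lying on $\alpha_i$ can be made either $+1$ or $-1$. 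Hence a single anklet changes ${\rm Ind}_G(c_i)$ by any prescribed $\pm 1$.

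Next I would verify that the same anklet leaves the index of every other chord unchanged. The crucial point is that the anklet is placed in a neighborhood of the initial endpoint of $c_i$ so small that it contains no other endpoint of the diagram; such a neighborhood exists because the diagram has finitely many endpoints. Consequently, for any chord $c\ne c_i$ the arc $\alpha$ defining ${\rm Ind}_G(c)$ either contains both endpoints of the anklet or neither: if the initial endpoint of $c_i$ lies in the interior of $\alpha$, then both anklet endpoints lie on $\alpha$ and their opposite signs $+\delta$ and $-\delta$ cancel; otherwise both lie off $\alpha$. In either case the contribution to ${\rm Ind}_G(c)$ is $0$. The same cancellation argument applies with $c=c_j$ for $j\ne i$, so anklets attached to distinct chords do not interfere with one another, and one can treat each $c_i$ independently.

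Finally I would assemble the construction: for each $i$ with $d_i\ne 0$, add exactly $|d_i|$ anklets near the initial endpoint of $c_i$, each chosen by the first step to contribute ${\rm sign}(d_i)$ to ${\rm Ind}(c_i)$, and for $d_i=0$ add none. Summing the $\pm 1$ contributions gives a diagram $G'$ with ${\rm Ind}_{G'}(c_i)={\rm Ind}_G(c_i)+d_i=a_i$ for every $i$, while by the second step ${\rm Ind}_{G'}(c)={\rm Ind}_G(c)$ for every $c\ne c_1,\dots,c_n$. The main thing to get right is the bookkeeping in the second step: precisely, that the smallness of the anklet forces its two endpoints to lie on the same side of every arc boundary except at the endpoint of $c_i$ it straddles, so that all unwanted contributions cancel in pairs. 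This is the only genuinely delicate point, and it is where the endpoint sign convention does the real work.
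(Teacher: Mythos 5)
Your proposal is correct and follows essentially the same route as the paper: set $d_i=a_i-{\rm Ind}_G(c_i)$, add $|d_i|$ anklets near the initial endpoint of $c_i$ whose endpoint on the arc of $c_i$ carries the sign of $d_i$, and observe that each anklet's two endpoints have cancelling signs for every other chord. You merely spell out the cancellation bookkeeping that the paper leaves to the figure.
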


\begin{proof}
Put $d_i=a_i-{\rm Ind}_G(c_i)$ for $i=1,\dots,n$. 
We add $|d_i|$ anklets to $c_i$ near its initial endpoint 
such that the signs of right endpoints of the anklets  
are equal to $\varepsilon_i$, 
where $\varepsilon_i$ is the sign of $d_i$. 
See Figure~\ref{fig03}(ii). 

Let $G'$ be the obtained Gauss diagram. 
Then we have 
$${\rm Ind}_{G'}(c_i)=
{\rm Ind}_G(c_i)+\varepsilon_i|d_i|=
{\rm Ind}_G(c_i)+d_i=a_i.$$
Furthermore 
the index of a chord other than $c_1,\dots,c_n$ 
does not change. 
\end{proof}


\section{The $0$-covering $K^{(0)}$}\label{sec3}

For an integer $n\geq 2$, 
we define a map 
$f_n:\{2,3,\dots,n\}\rightarrow{\Z}$ 
which satisfies 
$$\sum_{{r\leq i\leq n}\atop{i\equiv 0\ ({\rm mod}\ r)}} f_n(i)=-1$$
for any integer $r$ with $2\leq r\leq n$. 
The map $f_n$ exists uniquely. 
Put $$P_n=\{i \mid 2\leq i\leq n, f_n(i)\ne 0\}.$$

\begin{example}
For $n=10$, we have 
$$\left\{
\begin{array}{lrl}
f_{10}(2)+f_{10}(4)+f_{10}(6)+f_{10}(8)+f_{10}(10)&=-1 & \mbox{for } r=2, \\
f_{10}(3)+f_{10}(6)+f_{10}(9)&=-1 & \mbox{for } r=3, \\
f_{10}(4)+f_{10}(8)&=-1 & \mbox{for } r=4, \\
f_{10}(5)+f_{10}(10)&=-1 & \mbox{for }r=5, \\
f_{10}(6)&=-1 & \mbox{for } r=6, \\
f_{10}(7)&=-1 & \mbox{for } r=7, \\
f_{10}(8)&=-1 & \mbox{for } r=8, \\
f_{10}(9)&=-1 & \mbox{for } r=9, \\
f_{10}(10)&=-1 & \mbox{for } r=10. 
\end{array}\right.$$
Therefore we have 
$$f_{10}(i)=
\left\{
\begin{array}{rl}
2 & (i=2), \\
1 & (i=3), \\
0 & (i=4,5), \\
-1 & (6\leq i\leq 10) 
\end{array}\right.
\mbox{ and } 
P_{10}=\{2,3,6,7,8,9,10\}.$$
\end{example}

\begin{theorem}\label{thm32}
For any integer $n\geq 1$ and long virtual knot $J$, 
there is a long virtual knot $K$ such that 
$$K^{(r)}=
\left\{
\begin{array}{ll}
J & \mbox{for }r=0 \mbox{ and }r\geq n+1, \\
K & \mbox{for } r=1, \ and \\
O & \mbox{otherwise}. \\
\end{array}
\right.$$
Here, $O$ denotes the trivial long virtual knot. 
\end{theorem}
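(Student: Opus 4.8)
The plan is to realize $K$ by a single linear Gauss diagram $G$ built from a diagram of $J$ by attaching, to each of its chords, a carefully chosen family of parallel ``twin'' chords whose indices and signs are dictated by the function $f_n$. First I would fix a linear Gauss diagram $D$ of $J$ with chords $c_1,\dots,c_k$ of signs $\varepsilon_1,\dots,\varepsilon_k$. For each $c_j$ and each $i\in P_n$ I attach $|f_n(i)|$ parallel chords alongside $c_j$ (same orientation, adjacent endpoints), each of sign $\varepsilon_j\cdot\mathrm{sgn}\,f_n(i)$; these together with the ``main'' chord $c_j$ form a parallel cluster at the site of $c_j$. Using the anklet lemma I then set $\mathrm{Ind}_G$ equal to $0$ on every main chord and equal to $i$ on every twin belonging to the slot $i$, while the added anklets themselves all have index $\pm1$. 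Let $K=[G]$; note $K^{(1)}=K$ automatically.

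Next I would read off the coverings. Since every index occurring in $G$ lies in $\{0\}\cup\{\pm1\}\cup P_n\subseteq[-n,n]$, for $r=0$ and for every $r\ge n+1$ the only surviving chords are the main chords, so $G^{(0)}=G^{(r)}=D\sim J$ (the anklets, of index $\pm1$, and the twins, of index in $[2,n]$, are all deleted). This already forces the first line of the claim. The heart of the matter is the range $2\le r\le n$. Here $G^{(r)}$ keeps, in each cluster, the main chord (index $0$) together with exactly those twins whose slot $i$ satisfies $i\equiv 0\ (\mathrm{mod}\ r)$; after deletion of the anklets and the non-surviving twins, each cluster is again a clean family of parallel chords, so the clusters reduce independently.

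The key computation is that, within the $j$-th surviving cluster, regarding a sign-$\varepsilon_j$ chord as a generator $g_j$ and a sign-$(-\varepsilon_j)$ chord as $g_j^{-1}$, the exponent sum equals
\[
1+\sum_{{r\le i\le n}\atop{i\equiv 0\ (\mathrm{mod}\ r)}} f_n(i)=1+(-1)=0,
\]
by the defining relation of $f_n$. A balanced word over a single generator reduces freely to the empty word, and each such free cancellation is realized by a Reidemeister II move on the two adjacent chords of opposite sign inside the cluster; hence every cluster of $G^{(r)}$ collapses and $G^{(r)}$ is the trivial diagram, i.e.\ $K^{(r)}=O$. Assembling the three cases and invoking Lemma~\ref{lem21} for well-definedness completes the argument.

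I expect the main obstacle to be exactly the simultaneity built into the previous paragraph: a single diagram $G$ must make every cluster balanced for all $r$ in $\{2,\dots,n\}$ at once, and it is precisely the M\"obius-type relation defining $f_n$ (whose existence and uniqueness were recorded above) that packages these $n-1$ conditions into one choice of twin multiplicities. The remaining care is bookkeeping: checking that the anklets disturb only the indices they are meant to fix and disappear in every covering with $r\ne 1$, and that after removing the anklets and the dead twins each cluster is genuinely an unobstructed parallel family, so that the Reidemeister II cancellations indeed apply.
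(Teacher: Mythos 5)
Your construction is exactly the paper's: replace each chord of a diagram of $J$ by one index-$0$ ``main'' chord plus $|f_n(i)|$ twins of sign $\varepsilon\cdot\mathrm{sgn}\,f_n(i)$ set to index $i$ via anklets, then use the defining relation of $f_n$ to show each surviving cluster has total sign $1+(-1)=0$ and cancels by Reidemeister II moves. The argument is correct and follows the paper's proof essentially verbatim, including the key exponent-sum computation.
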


\begin{proof}
Let $H$ be a linear Gauss diagram of $J$. 
We construct a linear Gauss diagram $G$ as follows: 
First, we replace each chord $c$ of $H$ by 
$1+\sum_{i\in P_n}|f_n(i)|$ parallel chords labeled 
$c_0$ and $c_{ij}$ 
for $i\in P_n$ and $1\leq j\leq |f_n(i)|$. 
The orientations of $c_0$ and $c_{ij}$'s are 
the same as that of $c$. 
The signs of them are given such that 
\begin{itemize}
\item[(i)] 
$\varepsilon(c_0)=\varepsilon(c)$, and 
\item[(ii)] 
$\varepsilon(c_{ij})=\varepsilon(c)\delta_i$ for $i\in P_n$, 
where $\delta_i$ is the sign of $f_n(i)\ne 0$. 
\end{itemize}
Next, 
we add several anklets to each of $c_0$ and $c_{ij}$'s 
such that 
\begin{itemize}
\item[(iii)] 
${\rm Ind}_G(c_0)=0$, and 
\item[(iv)] 
${\rm Ind}_G(c_{ij})=i$ for $i\in P_n$. 
\end{itemize}
The obtained Gauss diagram is denoted by $G$.

Figure~\ref{fig04} shows the case $n=10$ 
replacing each chord $c$ of $H$ with nine chords 
$c_0,c_{21},c_{22}, c_{31},\dots,c_{10,1}$ and several anklets. 
The boxed numbers of the chords indicate their indices.

\begin{figure}[htb]
\begin{center}
\includegraphics[bb=0 0 352 69]{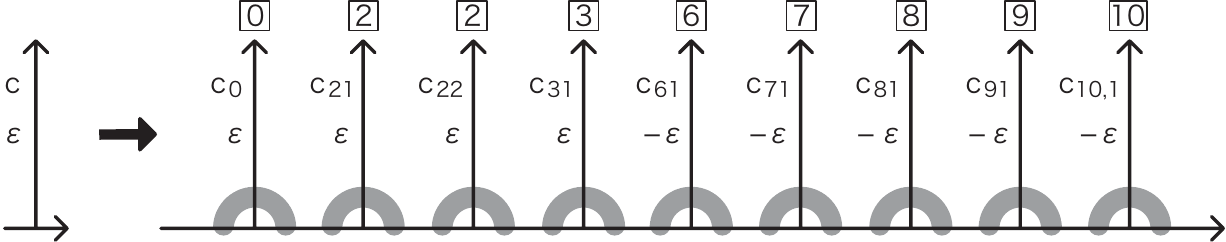}
\caption{The case $n=10$.}
\label{fig04}
\end{center}
\end{figure}

By the conditions (i)--(iv), 
we have $G^{(0)}=H$; 
in fact, we remove the chords whose indices are non-zero from $G$ 
to obtain $G^{(0)}$. 
Similarly, we have $G^{(r)}=H$ for any $r\geq n+1$. 
Therefore 
it holds that $K^{(0)}=K^{(r)}=J$ 
for $r\geq n+1$.

For $2\leq r\leq n$, 
$G^{(r)}$ is obtained from $G$ by removing the chords 
whose indices are not divisible by $r$. 
In particular, all the anklets are removed. 
Among the chords $c_0$ and $c_{ij}$'s, 
the sum of signs of chords 
whose indices are divisible by $r$ is equal to 
$$\varepsilon(c)+\varepsilon(c)
\sum_{{r\leq i\leq n}\atop{i\equiv 0\ ({\rm mod}\ r)}} 
\delta_i|f_n(i)|=
\varepsilon(c)+\varepsilon(c)
\sum_{{r\leq i\leq n}\atop{i\equiv 0\ ({\rm mod}\ r)}} 
f_n(i)=0.$$ 
Therefore all the chords of $G^{(r)}$ can be canceled 
by Reidemesiter moves II so that 
$K^{(r)}$ is the trivial long virtual knot. 
\end{proof}


\section{The $r$-covering $K^{(r)}$ for $r\geq 2$}\label{sec4} 

For an integer $n\geq 2$, we define a map 
$g_n:\{2,3,\dots,n\}\rightarrow{\Z}$ 
which satisfies 
$$g_n(n)=1 \mbox{ and } 
\sum_{{r\leq i\leq n}\atop{i\equiv 0\ ({\rm mod}\ r)}} g_n(i)=0$$
for any integer $r$ with $2\leq r< n$. 
The map $g_n$ exists uniquely. 
Put $$Q_n=\{i\mid 2\leq i\leq n, g_n(i)\ne 0\}.$$

\begin{example}
(i) 
For $n=10$, it holds that 
$$\left\{
\begin{array}{lrl}
g_{10}(2)+g_{10}(4)+g_{10}(6)+g_{10}(8)+g_{10}(10)&=0 & \mbox{for } r=2, \\
g_{10}(3)+g_{10}(6)+g_{10}(9)&=0 & \mbox{for } r=3, \\
g_{10}(4)+g_{10}(8)&=0 & \mbox{for } r=4, \\
g_{10}(5)+g_{10}(10)&=0 & \mbox{for }r=5, \\
g_{10}(6)&=0 & \mbox{for } r=6, \\
g_{10}(7)&=0 & \mbox{for } r=7, \\
g_{10}(8)&=0 & \mbox{for } r=8, \\
g_{10}(9)&=0 & \mbox{for } r=9, \\
g_{10}(10)&=1 & \mbox{for } r=10. 
\end{array}\right.$$
Therefore we have 
$$g_{10}(i)=
\left\{
\begin{array}{rl}
1 & (i=10), \\
-1 & (i=2,5), \\
0 & ({\rm otherwise}), \\
\end{array}\right.
\mbox{ and } 
Q_{10}=\{2,5,10\}.$$

(ii) For $n=12$, we have 
$$g_{12}(i)=
\left\{
\begin{array}{rl}
1 & (i=2,12), \\
-1 & (i=4,6), \\
0 & ({\rm otherwise}), \\
\end{array}\right.
\mbox{ and } 
Q_{12}=\{2,4,6,12\}.$$
\end{example}

\begin{theorem}\label{thm42}
For any integer $n\geq 2$ and long virtual knot $J$, 
there is a long virtual knot $K$ such that 
$$K^{(r)}=
\left\{
\begin{array}{ll}
J & \mbox{for }r=n, \\
K & \mbox{for } r=1, \ and \\
O & \mbox{otherwise}. \\
\end{array}
\right.$$
\end{theorem}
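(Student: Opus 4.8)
The plan is to mimic the proof of Theorem~\ref{thm32}, replacing the map $f_n$ by $g_n$ (and $P_n$ by $Q_n$) and dropping the index-zero chord $c_0$, so that the copy of $J$ now reappears at level $n$ rather than at level $0$. First I would fix a linear Gauss diagram $H$ of $J$ and build a linear Gauss diagram $G$ by replacing each chord $c$ of $H$ with $\sum_{i\in Q_n}|g_n(i)|$ parallel chords $c_{ij}$ (for $i\in Q_n$ and $1\leq j\leq|g_n(i)|$), all oriented as $c$ and carrying sign $\varepsilon(c_{ij})=\varepsilon(c)\delta_i$, where $\delta_i$ is the sign of $g_n(i)$. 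Using the anklet lemma of Section~\ref{sec2}, I would attach anklets to each $c_{ij}$ so that ${\rm Ind}_G(c_{ij})=i$; since every anklet has index $\pm1$, this does not disturb the prescribed indices. Note that $n\in Q_n$ because $g_n(n)=1$, and that the corresponding single chord $c_{n1}$ carries sign $\varepsilon(c)$. I then let $K$ be the long virtual knot presented by $G$, so that $K^{(r)}$ is presented by $G^{(r)}$.

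Next I would compute $G^{(r)}$ for each $r$. For $r=0$ no $c_{ij}$ (index $i\geq2$) and no anklet (index $\pm1$) survives, so $G^{(0)}$ is empty; for $r\geq n+1$ every nonzero index present has absolute value at most $n<r$, so again nothing survives. In both cases $K^{(r)}=O$. For $2\leq r\leq n-1$ the anklets are deleted, and for each original $c$ the surviving chords are parallel with total sign $\varepsilon(c)\sum_{r\leq i\leq n,\ i\equiv0\ ({\rm mod}\ r)}g_n(i)=0$ by the defining relation for $g_n$; hence they cancel in pairs by Reidemeister~II moves and $K^{(r)}=O$. Finally, for $r=n$ the only surviving chord from each $c$ is $c_{n1}$, the unique $c_{ij}$ whose index $n$ is divisible by $n$, with sign $\varepsilon(c)$ and the same position and orientation as $c$; thus $G^{(n)}=H$ and $K^{(n)}=J$.

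The combinatorial heart of the argument is the precise shape of $g_n$: its boundary value $g_n(n)=1$ guarantees that exactly one chord reconstructs each chord of $H$ at level $n$, while the homogeneous relations $\sum_{r\leq i\leq n,\ i\equiv0\ ({\rm mod}\ r)}g_n(i)=0$ for $2\leq r<n$ force the cancellation at all intermediate levels. The step I expect to require the most care is therefore this cancellation for $2\leq r\leq n-1$: one must check that the signed chords surviving from a single $c$ are genuinely parallel and grouped closely enough to be removed by Reidemeister~II independently of the clusters coming from the other chords of $H$. This holds because the replacement of each $c$ is supported in a small neighbourhood of $c$, so the clusters associated with distinct chords of $H$ do not interfere, and each cluster of net sign zero can be undone on its own.
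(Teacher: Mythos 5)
Your proposal is correct and follows essentially the same route as the paper's own proof: replace each chord $c$ of $H$ by parallel chords of sign $\varepsilon(c)\delta_i$ indexed over $Q_n$, use anklets to set their indices to $i$, and invoke the defining relations of $g_n$ (the boundary value $g_n(n)=1$ for $r=n$ and the homogeneous sums for $2\leq r<n$) to get $G^{(n)}=H$ and cancellation by Reidemeister~II moves elsewhere. The only cosmetic difference is your double index $c_{ij}$ with $1\leq j\leq |g_n(i)|$, which is harmless (and in fact redundant, since $g_n(i)=\mu(n/i)\in\{-1,0,1\}$ by Proposition~\ref{prop43}).
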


\begin{proof}
The proof is similar to that of Theorem~\ref{thm32} 
by using $g_n$ instead of $f_n$. 
Let $H$ be a linear Gauss diagram of $J$. 
We construct a linear Gauss diagram $G$ of $K$ 
as follows: 
First, we replace each chord $c$ of $H$ 
by $\sum_{i\in Q_n} |g_n(i)|$ parallel chords 
labeled $c_i$ for $i\in Q_n$. 
The orientations of $c_i$'s are the same as that of $c$. 
The signs of them are given such that 
$\varepsilon(c_i)=\varepsilon(c)\delta_i$, 
where $\delta_i$ is the sign of $g_n(i)\ne 0$. 
Next, we add several anklets to each of $c_i$'s 
such that 
${\rm Ind}_G(c_i)=i$ for $i\in Q_n$. 
The obtained Gauss diagram is denoted by $G$.

In the left of Figure~\ref{fig05}, 
we shows the case $n=10$ 
replacing each chord $c$ of $H$ with three chords 
$c_2,c_5,c_{10}$ and several anklets. 
In the right figure, 
the case of $n=12$ is given.

\begin{figure}[htb]
\begin{center}
\includegraphics[bb=0 0 325 69]{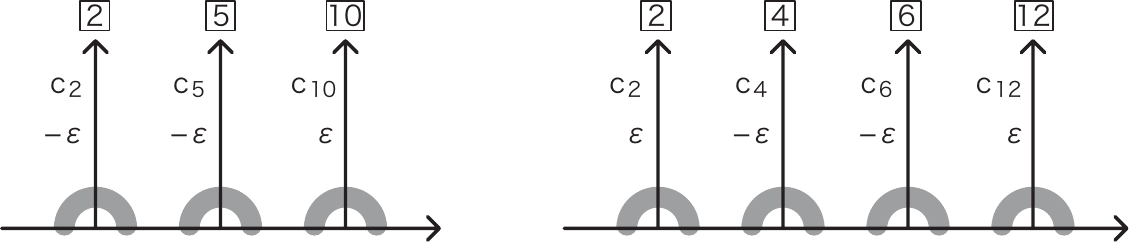}
\caption{The cases $n=10$ and $12$.}
\label{fig05}
\end{center}
\end{figure}

Since any chord $c$ of $G$ satisfies 
$1\leq |{\rm Ind}_G(c)|\leq n$, 
we obtain $G^{(0)}$ and $G^{(r)}$ for $r\geq n+1$ 
by removing all the chords from $G$.

For $2\leq r<n$, 
$G^{(r)}$ is obtained from $G$ by 
removing the chords 
whose indices are not divisible by $r$. 
Among the chords $c_i$'s, 
the sum of signs of chords whose indices are divisible by $r$ is 
equal to 
$$\varepsilon(c)
\sum_{{r\leq i\leq n}\atop{i\equiv 0\ ({\rm mod}\ r)}} \delta_i |g_n(i)|=
\varepsilon(c)
\sum_{{r\leq i\leq n}\atop{i\equiv 0\ ({\rm mod}\ r)}} g_n(i) =0.$$
Therefore all the chords of $G^{(r)}$ can be 
canceled by Reidemeister moves II so that $K^{(r)}$ 
is the trivial long virtual knot. 

Finally, for $r=n$, we have $G^{(n)}=H$ by definition immediately. 
\end{proof}

We see that $g_n$ 
is coincident with a famous function as follows.

\begin{proposition}\label{prop43}
Let $\mu$ be the M\"obius function. 
Then we have 
$$g_n(i)=
\left\{
\begin{array}{cl}
\mu(\frac{n}{i}) & \mbox{if $n$ is divisible by $i$, and}\\
0 & \mbox{otherwise}. 
\end{array}\right.$$
\end{proposition}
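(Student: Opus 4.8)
The plan is to show that the function defined by the Möbius formula satisfies the two defining relations of $g_n$, and then to invoke the uniqueness asserted just before the definition. Define $h_n(i)=\mu(n/i)$ when $i\mid n$ and $h_n(i)=0$ otherwise, for $2\leq i\leq n$. Since $g_n$ is characterized uniquely by the conditions $g_n(n)=1$ together with $\sum_{r\leq i\leq n,\ i\equiv 0\,({\rm mod}\ r)} g_n(i)=0$ for all $r$ with $2\leq r<n$, it suffices to verify that $h_n$ obeys the same system. The normalization $h_n(n)=\mu(n/n)=\mu(1)=1$ is immediate.

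For the summation relations, I would fix $r$ with $2\leq r<n$. If $r\nmid n$ then no multiple $i$ of $r$ in the range can divide $n$, so every term $h_n(i)$ in the sum vanishes and the relation holds trivially. The substantive case is $r\mid n$. Here I would reindex the sum: the indices $i$ that are multiples of $r$, lie in $[r,n]$, and divide $n$ are exactly $i=rk$ where $k$ ranges over the divisors of $n/r$. Writing $n=rm$, as $i=rk$ runs over these, the cofactor $n/i=m/k$ runs over all divisors of $m=n/r$. Hence
$$\sum_{{r\leq i\leq n}\atop{i\equiv 0\ ({\rm mod}\ r)}} h_n(i)
=\sum_{k\mid m}\mu\!\left(\tfrac{m}{k}\right)
=\sum_{d\mid m}\mu(d).$$
The classical identity $\sum_{d\mid m}\mu(d)=0$ for $m>1$ now gives the result, and here $m=n/r>1$ precisely because $r<n$. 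Thus $h_n$ satisfies all the defining relations, and uniqueness forces $h_n=g_n$.

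The only point requiring a little care is the reindexing that turns the constrained sum over multiples of $r$ dividing $n$ into an unconstrained sum over divisors of $n/r$, so that the standard Möbius identity $\sum_{d\mid m}\mu(d)=[m=1]$ applies; everything else is a direct check against the uniquely determined system. I do not anticipate a genuine obstacle, since the content is purely the observation that the triangular linear system defining $g_n$ is solved by Möbius inversion of the constant function.
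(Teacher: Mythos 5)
Your proposal is correct and follows essentially the same route as the paper: define $h_n$ by the M\"obius formula, check $h_n(n)=1$, split into the cases $r\nmid n$ (all terms vanish) and $r\mid n$ (reindex the sum as $\sum_{d\mid n/r}\mu(d)=0$), and conclude by uniqueness. Your write-up is slightly more explicit about the reindexing and about why $n/r>1$, but the argument is the same.
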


\begin{proof}
Let $h_n(i)$ be the right hand side of the equation in the proposition. 
Since $h_n(n)=\mu(1)=1=g_n(n)$, 
it is sufficient to prove that 
$$\sum_{{r\leq i\leq n}\atop{i\equiv 0\ ({\rm mod}\ r)}} h_n(i)=0$$
for any integer $r$ with $2\leq r< n$. 

Assume that $n$ is not divisible by $r$. 
Then $n$ is not divisible by any $i$ 
such that $r\leq i\leq n$ and $i\equiv 0$ (mod~$r$). 
Therefore we have 
$$\sum_{{r\leq i\leq n}\atop{i\equiv 0\ ({\rm mod}\ r)}} h_n(i)=
\sum_{{r\leq i\leq n}\atop{i\equiv 0\ ({\rm mod}\ r)}} 0=0.$$

Assume that $n$ is divisible by $r$. 
By the property of the M\"obius function, it holds that 
$$\sum_{{r\leq i\leq n}\atop{i\equiv 0\ ({\rm mod}\ r)}} h_n(i)=
\sum_{r|i|n} \mu(\frac{n}{i})
=\sum_{d|\frac{n}{r}}\mu(d)=0.$$

Therefore we have $g_n=h_n$. 
\end{proof}


\section{The writhe polynomial}\label{sec5} 

For an integer $n\ne 0$ 
and a sign $\varepsilon=\pm 1$, 
the {\it $(n,\varepsilon)$-snail} is a linear Gauss diagram 
consisting of a chord $c$ with $\varepsilon(c)=\varepsilon$ 
and $|n|$ anklets 
such that the indices of $c$ and each anklet 
are equal to $n$ and $1$, respectively. 
See Figure~\ref{fig06}.

\begin{figure}[htb]
\begin{center}
\includegraphics[bb=0 0 343 165]{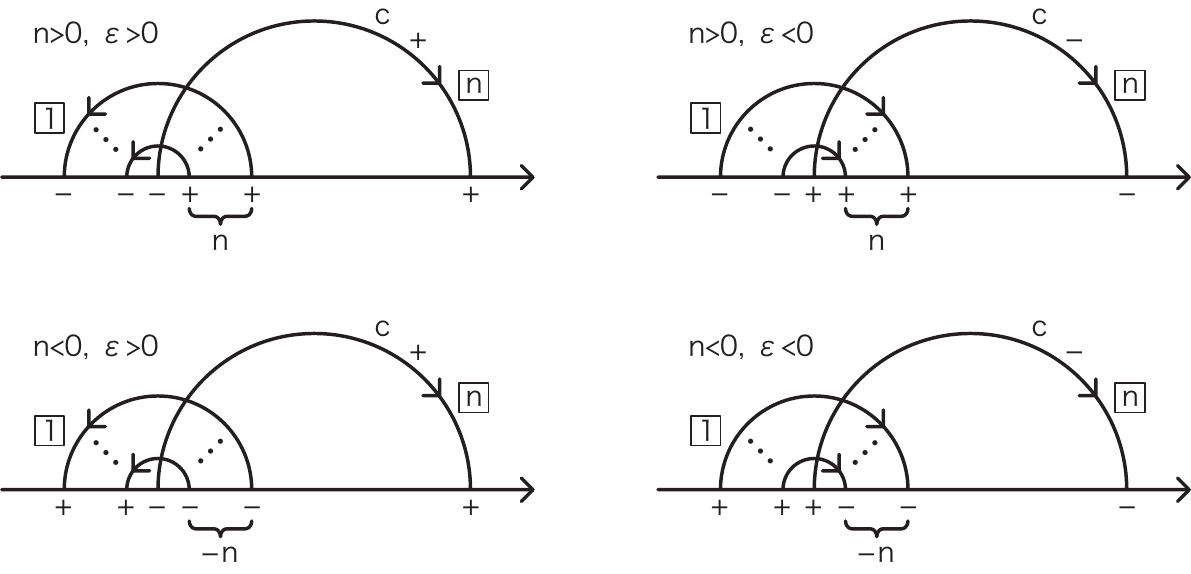}
\caption{The snail $S(n,\varepsilon)$.}
\label{fig06}
\end{center}
\end{figure}

Let $G$ be a Gauss diagram of a (long) virtual knot $K$. 
For an integer $n\ne 0$, 
we denote by $w_n(G)$ the sum of signs of all chords $c$ of $G$ 
with ${\rm Ind}_G(c)=n$. 
Then $w_n(G)$ does not depend on a particular choice of 
$G$ of $K$; 
that is, $w_n(G)$ is an invariant of $K$. 
In \cite{ST}, the proof is given for a virtual knot, 
and the case of a long virtual knot is similarly proved. 
It is called the {\it $n$-writhe} of $K$ 
and denoted by $w_n(K)$. 
The {\it writhe polynomial} of $K$ 
is defined by 
$$W_K(t)=\sum_{n\ne 0}w_n(K)t^n-
\sum_{n\ne 0}w_n(K)\in{\Z}[t,t^{-1}].$$
This invariant was introduced in several papers 
\cite{CG, Kau, ST} independently.

\begin{theorem}\label{thm51}
Let $J$ be a long virtual knot, 
and $f(t)\in{\Z}[t,t^{-1}]$ a Laurent polynomial with 
$f(1)=f'(1)=0$. 
Then there is a long virtual knot $K$ such that 
\begin{itemize}
\item[(i)] 
$K^{(r)}=J^{(r)}$ for any integer $r=0$ and $r\geq 2$, and 
\item[(ii)] 
$W_K(t)=f(t)$. 
\end{itemize}
\end{theorem}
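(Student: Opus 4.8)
The plan is to realize $f$ by concatenating suitable snails onto a fixed linear Gauss diagram of $J$, exploiting the crucial feature that every snail has trivial $r$-covering for $r=0$ and for every $r\ge 2$.

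First I would compute the writhe polynomial of a single snail. In $S(n,\varepsilon)$ the chord $c$ has index $n$ and sign $\varepsilon$, while the $|n|$ anklets each have index $1$; reading the signs off the definition, the anklets contribute total sign $-\varepsilon n$ to $w_1$, so that
\[ W_{S(n,\varepsilon)}(t)=\varepsilon(t^{n}-1)-\varepsilon n\,(t-1)=\varepsilon\,p_n(t), \]
where $p_n(t):=(t^{n}-1)-n(t-1)$. Each $p_n$ lies in the module $V=\{g\in{\Z}[t,t^{-1}]\mid g(1)=g'(1)=0\}$, and a short linear-algebra check (writing $t^{n}=p_n+nt-(n-1)$ and using $g(1)=g'(1)=0$) shows that $\{p_n\mid n\neq 0,1\}$ is a ${\Z}$-basis of $V$. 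Since $W_J\in V$ (writhe polynomials satisfy $g(1)=g'(1)=0$) and $f\in V$ by hypothesis, I can write $f-W_J=\sum_{n\neq 0,1}c_n\,p_n$ as a finite ${\Z}$-linear combination.

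Next I would record the covering behaviour of a snail. For $r=0$ every chord of $S(n,\varepsilon)$ has nonzero index, so $S(n,\varepsilon)^{(0)}=O$. For $r\ge 2$ all anklets (index $1$) are removed; if $r\nmid n$ the chord $c$ is removed as well, while if $r\mid n$ only $c$ survives, and a single chord is trivial by a Reidemeister~I move. Hence $S(n,\varepsilon)^{(r)}=O$ for $r=0$ and all $r\ge 2$.

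Finally I would assemble the diagram. Let $H$ be a linear Gauss diagram of $J$, and let $G$ be obtained by placing $H$ together with $|c_n|$ copies of $S(n,\mathrm{sgn}\,c_n)$, for each $n$ with $c_n\neq 0$, in pairwise disjoint intervals of the line. The point that needs care—and the main obstacle—is that inserting the snails must not disturb the indices of the chords of $H$ (nor of one another). This follows from the observation that each block ($H$ and every snail) has total endpoint-sign $0$: for any chord $c'$, whose two endpoints lie in a single block, each other block is a contiguous arc lying entirely on one side of $c'$, and therefore contributes $0$ to ${\rm Ind}_G(c')$ whether or not it meets the arc $\alpha$ of $c'$. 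Thus indices are block-local, so $w_m$ is additive over the blocks and $G^{(r)}$ is the juxtaposition of the blocks' $r$-coverings. This yields $W_K=W_J+\sum_{n}c_n\,p_n=f$, and for $r=0$ and $r\ge 2$ it yields $K^{(r)}=J^{(r)}\#O\#\cdots\#O=J^{(r)}$, as desired.
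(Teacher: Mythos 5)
Your proposal is correct and follows essentially the same route as the paper: juxtapose a linear Gauss diagram of $J$ with $|a_n|$ copies of the snail $S(n,\mathrm{sgn}\,a_n)$, use the fact that each snail contributes $\varepsilon\bigl(t^n-nt+(n-1)\bigr)$ to the writhe polynomial to realize $f-W_J$, and observe that every snail has trivial $r$-covering for $r=0$ and $r\geq 2$. Your reformulation of the coefficient extraction as expressing $f-W_J$ in the basis $\{p_n\}$ of the module $\{g\mid g(1)=g'(1)=0\}$, and your explicit verification that juxtaposed blocks do not perturb each other's indices, are just more detailed renderings of steps the paper leaves implicit.
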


\begin{proof}
Put $g(t)=f(t)-W_J(t)=\sum_{n\in{\Z}}a_nt^n$. 
By Theorem~\ref{thm11}, we have $g(1)=g'(1)=0$. 
Therefore it holds that 
$a_0=\sum_{n\ne 0,1}(n-1)a_n$ and 
$a_1=-\sum_{n\ne 0,1} na_n$.

Let $H$ be a linear Gauss diagram of $J$. 
We construct a linear Gauss diagram $G$ 
by juxtaposing $H$ and $|a_n|$ copies of $S(n,\varepsilon_n)$ 
for every integer $n$ with $n\ne 0,1$ and $a_n\ne 0$. 
Here, $\varepsilon_n$ is the sign of $a_n\ne 0$.

Let $K$ be the long virtual knot presented by $G$. 
The contribution of each snail $S(n,\varepsilon_n)$ 
to the writhe polynomial $W_K(t)$ is equal to 
$\varepsilon_nt^n-\varepsilon_nnt+\varepsilon_n(n-1)$. 
Therefore it holds that 

\begin{eqnarray*}
W_K(t)&=&
W_J(t)+
\sum_{n\ne 0,1} |a_n|\bigr(\varepsilon_nt^n-\varepsilon_nnt+\varepsilon_n(n-1)\bigr)\\
&=&
W_J(t)+\sum_{n\ne 0,1} a_n\bigr(t^n-nt+(n-1)\bigr)\\
&=&
W_J(t)+g(t)=f(t). 
\end{eqnarray*}

By definition, 
$S(n,\varepsilon_n)^{(r)}$ has the only chord $c$  
if $n$ is divisible by $r$. 
Otherwise it has no chord. 
Therefore $G^{(r)}$ is equivalent to $H^{(r)}$, 
and hence $K^{(r)}=J^{(r)}$ 
for any integer $r=0$ and $r\geq 2$. 
\end{proof}

\begin{theorem}\label{thm52}
Let $m\geq 1$ be an integer, $J_n$ $(0\leq n\leq m, n\ne 1)$  
$m$ long virtual knots, 
and $f(t)$ a Laurent polynomial with $f(1)=f'(1)=0$. 
Then there is a long virtual knot $K$ such that 
$$K^{(r)}=
\left\{
\begin{array}{ll}
J_0 & \mbox{for }r=0 \mbox{ and }r\geq m+1, \\ 
K & \mbox{for } r=1,  \\
J_r & \mbox{otherwise}, \\
\end{array}
\right.$$
and $W_K(t)=f(t)$. 
\end{theorem}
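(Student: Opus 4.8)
Looking at Theorem~\ref{thm52}, I need to construct a single long virtual knot $K$ realizing prescribed $r$-coverings $J_r$ for $r=2,\dots,m$ (with $J_0$ realized at $r=0$ and $r\geq m+1$), while simultaneously achieving a prescribed writhe polynomial $f(t)$. The structure of the earlier theorems makes the strategy clear: I have building blocks that each control \emph{one} coverable datum while collapsing all others to triviality, and the job is to superpose them.

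Here is the plan. First I would invoke Theorem~\ref{thm32} with the given $J_0$ and with $n=m$, obtaining a long virtual knot $K_0$ whose $0$-covering and $r$-covering for $r\geq m+1$ equal $J_0$, with $K_0^{(r)}=O$ for $2\leq r\leq m$. Next, for each $r$ with $2\leq r\leq m$, I would apply Theorem~\ref{thm42} with $J=J_r$ and $n=r$, producing a long virtual knot $K_r$ with $K_r^{(r)}=J_r$ and $K_r^{(s)}=O$ for all other $s\neq 1$. The key observation is that the $r$-covering operation behaves well under juxtaposition (connected sum) of long virtual knots: juxtaposing linear Gauss diagrams and then deleting chords of index not divisible by $r$ is the same as deleting within each summand and then juxtaposing, since the index of a chord is computed from the closure and anklets keep each snail's indices self-contained. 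Thus if $K$ is presented by juxtaposing the diagrams of $K_0,K_2,\dots,K_m$, then $K^{(r)}$ is the juxtaposition of $K_0^{(r)},K_2^{(r)},\dots,K_m^{(r)}$.

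With this additivity, the coverings combine cleanly: for $2\leq r\leq m$ all summands collapse to $O$ except $K_r^{(r)}=J_r$, giving $K^{(r)}=J_r$; for $r=0$ and $r\geq m+1$ only $K_0$ survives, giving $K^{(r)}=J_0$; and $K^{(1)}=K$ trivially. The remaining requirement is $W_K(t)=f(t)$, and here I would finish exactly as in the proof of Theorem~\ref{thm51}: set $g(t)=f(t)-W_{K'}(t)$ where $K'$ is the juxtaposition constructed so far, note $g(1)=g'(1)=0$ by Theorem~\ref{thm11} since each $W_{K_r}$ satisfies these relations, and then further juxtapose $|a_n|$ copies of the snail $S(n,\varepsilon_n)$ for each $n\neq 0,1$ with $a_n\neq 0$. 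The snails adjust $W_K(t)$ additively to hit $f(t)$ while, by the computation in Theorem~\ref{thm51}, leaving every $r$-covering for $r=0$ and $r\geq 2$ unchanged.

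The main obstacle to flag is the interaction between the two goals rather than either goal alone. I must verify that adding the writhe-correcting snails does not disturb the already-arranged coverings $K^{(r)}=J_r$ for $2\leq r\leq m$; this is precisely the content of the last paragraph of the proof of Theorem~\ref{thm51}, where $S(n,\varepsilon_n)^{(r)}$ contributes a single chord when $r\mid n$ and nothing otherwise. Since that single surviving chord sits in its own snail and can be isolated from the $J_r$-carrying part of the diagram, I would check that the juxtaposed covering is $J_r$ up to the contribution of these isolated chords; the cleanest route is to observe that the snail's effect is already accounted for in $W_K$ and that for the covering identities we only need equivalence of long virtual knots, which the additivity of juxtaposition under $(\cdot)^{(r)}$ supplies directly. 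Once the additivity lemma and the snail bookkeeping are in hand, the proof is essentially the assembly of Theorems~\ref{thm32}, \ref{thm42}, and \ref{thm51} with no further computation.
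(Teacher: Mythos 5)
Your proposal is correct and follows exactly the paper's own proof: apply Theorem~\ref{thm32} to $(m,J_0)$, Theorem~\ref{thm42} to each $(n,J_n)$, juxtapose, and then apply Theorem~\ref{thm51} to the result and $f(t)$. The additivity of $(\cdot)^{(r)}$ under juxtaposition that you spell out is left implicit in the paper but is the same underlying justification.
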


\begin{proof}
Let $K_0$ be a long virtual knot 
obtained by applying Theorem~\ref{thm32} 
to the pair of $m$ and $J_0$. 
Let $K_n$ be a long virtual knot 
obtained by applying Theorem~ \ref{thm42} 
to each pair of $n$ and $J_n$ 
$(2\leq n\leq m)$. 
We juxtapose $K_0,K_2,\dots,K_m$ 
to have a long virtual knot $K'$. 
Let $K$ be a long virtual knot 
obtained by applying Theorem~\ref{thm51} 
to the pair of $K'$ and $f(t)$. 
Then we see that 
$K$ is a desired long virtual knot. 
\end{proof}

\begin{proof}[Proof of {\rm Theorem~\ref{thm12}}] 
Let $J_n^{\circ}$ be a long virtual knot 
whose closure is $J_n$ 
$(0\leq n\leq m, n\ne 1)$. 
Let $K^{\circ}$ be a long virtual knot 
obtained by applying Theorem~\ref{thm52}. 
Then we see that the closure of $K^{\circ}$ 
is a desired virtual knot. 
\end{proof}


\end{document}